\theoremstyle{plain}
\newtheorem{Thm}{Theorem}
\newtheorem{Lem}{Lemma}
\theoremstyle{definition}
\newtheorem*{Ack}{Acknowledgment}
\theoremstyle{remark}
\def\Z{\mathbb Z}
\def\N{\mathbb N}
\def\L{\mathcal L}
\def\p{\mathfrak p}
\def\1{{\bf 1}}
\def\L{\mathcal L}
\def\pmod #1{\ ({\rm mod}\ #1)}
\def\mod #1{\ {\rm mod}\ #1}
\begin{document}
\medskip
\title{Stern's type congruences for $L(-k,\chi)$}
\author{Hao Pan}
\address{Department of Mathematics, Nanjing University, Nanjing 210093,
People's Republic of China}
\email{haopan79@yahoo.com.cn}
\author{Yong Zhang}
\address{Department of Basic Course, Nanjing Institute of Technology, Nanjing 211167,
People's Republic of China}
\email{yongzhang1982@163.com}
 \maketitle

\vskip 10pt

\section{Introduction}
\setcounter{Lem}{0}\setcounter{Thm}{0}\setcounter{Cor}{0}
\setcounter{equation}{0}

The Bernoulli numbers $B_k$ are given by
$$
\sum_{k=0}^{\infty}\frac{B_k}{k!}t^k=\frac{t}{e^{t}-1}.
$$
The classical Kummer theorem asserts that for a prime $p\geq 5$, $n\geq 1$ and even $k,l\geq 0$,
$$
(1-p^{k-1})\frac{B_k}{k}\equiv(1-p^{l-1})\frac{B_l}{l}\pmod{p^{n}}
$$
under the condition that $k\equiv l\pmod{\phi(p^n)}$ and $p-1\nmid k$, where $\phi$ is the Euler totient function. For a Dirichlet character $\chi$ modulo $m$, the generalized Bernoulli numbers $B_{k,\chi}$ are defined by
$$
\sum_{k=0}^{\infty}\frac{B_{k,\chi}}{k!}t^k=\sum_{a=1}^m\chi(a)\frac{te^{at}}{e^{mt}-1}.
$$
When $k$ and $\chi$ have the opposite parity (i.e., $\chi(-1)\not=(-1)^k$), it is well-known that
$$
L(-k,\chi)=-\frac{B_{k+1,\chi}}{k+1}
$$
for the non-negative integer $k$, where $L(s,\chi)$ is the Dirichlet $L$-function given by
$$
L(s,\chi)=\sum_{n=1}^\infty\frac{\chi(n)}{n^s}
$$
for $\Re s>1$. Similarly, Envall \cite{Ernvall83} also proved the Kummer-type congruence for the generalized Bernoulli numbers:
\begin{equation}\label{gbk}
(1-\chi(p)p^{k-1})\frac{B_{k,\chi}}{k}\equiv(1-\chi(p)p^{l-1})\frac{B_{l,\chi}}{l}\pmod{p^{n}}
\end{equation}
provided that  the conductor of $\chi$ is not a power of $p$ and $k\equiv l\pmod{\phi(p^n)}$. Clearly, (\ref{gbk}) can be rewritten as
$$
(1-\chi(p)p^{1-k})L(k-1,\chi)\equiv(1-\chi(p)p^{l-1})L(1-l,\chi)\pmod{p^{n}}.
$$

On the other hand, the Euler numbers (or the secant numbers) $E_k$ are given by
$$
\sum_{k=0}^{\infty}\frac{E_k}{k!}t^k=\frac{2t}{e^{t}+e^{-t}}.
$$
It is not difficult to check that
$$
E_k=2L(-k,\chi_{-4}),
$$
where $\chi_{-4}$ is the unique non-trivial character modulo $4$. Hence for an odd prime $p$, we have
\begin{equation}\label{ek}
E_{k}\equiv E_l\pmod{p},
\end{equation}
provided that $k,l\geq 0$ are even and $k\equiv l\pmod{p-1}$. In fact, (\ref{ek}) was also obtained by Kummer. However, for $E_n$, the case $p=2$ is a little special. An old result of Stern says that
for even $k,l\geq 0$,
$$
E_{k}\equiv E_l\pmod{2^n}
$$
if and only if $k\equiv l\pmod{2^n}$. Clearly Stern's result can be restated as the congruence
\begin{equation}\label{stern}
E_{k+2^nq}\equiv E_{k}+2^n\pmod{2^{n+1}}
\end{equation}
for even $k\geq0$ and odd $q\geq 1$. For the proofs and extensions of Stern's congruence, the readers may refer to \cite{Wagstaff02}, \cite{Sun05} and \cite{Sun10}.

Note that the Kummer type congruence (\ref{gbk}) requires $\chi$ is not a character modulo the power of $p$. So it is natural ask whether there exists the Stern type congruence for $L(-k,\chi)$, provided that the conductor of $\chi$ is a power of $p$. The main purpose of this paper is to establish such congruence. Suppose that $p$ is a prime and $\chi$ is a character with the conductor $p^m$. Define
$$
\L_{k,\chi}=\begin{cases}(1-\chi(5))L(-k,\chi),&\text{if }p=2\text{ and }m\geq 3,\\
(1-\chi(p+1))L(-k,\chi),&\text{if }p\geq 3\text{ and }m\geq 2,\\\end{cases}
$$
\begin{Thm}\label{sc2} Suppose that $m\geq 3$ and $\chi$ be a primitive character $\chi$ modulo $2^m$. Let  $q\geq 1$ be odd and $n\geq 1$. If $k\geq 0$ has the opposite parity as $\chi$,
then
\begin{equation}
\L_{k+2^nq,\chi}-\L_{k,\chi}\equiv\frac{2^{n+2}}{1-\bar{\chi}(5)}\cdot\L_{d,\chi}\pmod{2^{n+3}},
\end{equation}
where $d\in\{0,1\}$ satisfying $k\equiv d\pmod{2}$. In particular, we have
\begin{equation}
\L_{k,\chi}\equiv\L_{l,\chi}\pmod{2^{n+2}}
\end{equation}
if and only if $k\equiv l\pmod{2^n}$.
\end{Thm}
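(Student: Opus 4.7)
The strategy is to express $(1-\chi(5))\beta_{k,\chi}$ as a $2$-adic integral over $\Z_2^*$, following the Iwasawa-theoretic construction underlying the Kubota--Leopoldt $2$-adic $L$-function. The key interpolation identity I would rely on is that there exists a $\Z_2[\chi]$-valued measure $\mu_\chi$ on $\Z_2^*$ with
\[
\int_{\Z_2^*}\chi(x)\,x^{k-1}\,d\mu_\chi(x)=(1-\chi(5))\beta_{k,\chi}\qquad(k\geq 1).
\]
Since $\chi(x)x^{k-1}$ is a continuous $\Z_2[\chi]$-valued function on the compact set $\Z_2^*$, integrality of $(1-\chi(5))\beta_{k,\chi}$ is then automatic. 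Subtracting,
\[
(1-\chi(5))(\beta_{k+2^nq,\chi}-\beta_{k,\chi})=\int_{\Z_2^*}\chi(x)\,x^{k-1}(x^{2^nq}-1)\,d\mu_\chi(x).
\]

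The problem therefore reduces to a careful $2$-adic analysis of $x^{2^nq}-1$ inside the integral. Writing each $x\in\Z_2^*$ uniquely as $x=\epsilon\cdot 5^t$ with $\epsilon\in\{\pm 1\}$ and $t\in\Z_2$, and using that $2^nq$ is even together with $\log 5=4u$ for some $u\in\Z_2^*$, I obtain
\[
x^{2^nq}-1\;=\;5^{2^nqt}-1\;\equiv\; 2^{n+2}\,q\,t\,u\pmod{2^{2n+3}}.
\]
Thus modulo $2^{n+3}$ only the linear term in $t$ contributes, and the task reduces to evaluating $\int_{\Z_2^*}\chi(x)\,x^{k-1}\,t\,d\mu_\chi(x)$. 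Using the factorization $\chi(x)\,x^{k-1}=\chi(\epsilon)\,\epsilon^{k-1}(\chi(5)\cdot 5^{k-1})^t$, the factor $t$ behaves like a logarithmic derivative with respect to the cyclotomic exponent, which shifts the interpolation formula. Parity considerations on $\chi$ force only indices $d$ with $d\equiv k\pmod 2$ to contribute, and working modulo $2^{n+2}$ collapses the answer down to $d\in\{1,2\}$ (higher $\beta_{j,\chi}$ of matching parity being absorbed into the $2^{n+2}$ error via the Kummer-type recursion implicit in step one).

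The main obstacle I anticipate is extracting the precise constant $\chi(5)$ that appears in the right-hand side of the statement: this requires careful bookkeeping of the normalization of $\mu_\chi$, separate handling of the $\epsilon=+1$ and $\epsilon=-1$ halves of $\Z_2^*$, and tracking how differentiation in the cyclotomic exponent produces exactly the factor $\chi(5)$ after integrating against $\mu_\chi$. Once this identification is in place, the weaker congruence modulo $2^{n+1}$ follows essentially for free, after verifying that $2\chi(5)\beta_{d,\chi}\in\Z_2[\chi]$ for $d\in\{1,2\}$.
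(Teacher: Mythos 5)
Your route---$2$-adic measures attached to the Kubota--Leopoldt $L$-function---is genuinely different from the paper's, which works entirely with the elementary Voronoi-type congruence of Theorem \ref{ab}: there one picks $a$ with $5a\equiv1\pmod{2^{n_*+4}}$, writes $(\chi(a)a^k-1)\beta_{k,\chi}$ as the finite sum $\chi(a)a^{k-1}\sum_j\chi(j)j^{k-1}\lfloor ja/2^{n_*+3}\rfloor$, and compares the sums at exponents $k$ and $k+2^nq$ termwise via $(2j+1)^{2^nq}\equiv1+2^{n+1}(\cdots)$. A measure-theoretic argument could in principle be made to work, but as written your proposal has a genuine gap at its very first step, and that gap propagates into the key constant.

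The interpolation formula you posit, $\int_{\Z_2^*}\chi(x)x^{k-1}\,d\mu_\chi=(1-\chi(5))\beta_{k,\chi}$ with a regularization factor independent of $k$, is not what the standard construction provides: the regularized Bernoulli measure with $c=5$ interpolates $(1-\chi(5)5^{k})\beta_{k,\chi}$, and no measure can have your moments, since $\frac{1-\chi(5)}{1-\chi(5)5^{k}}$ is a $k$-dependent unit and multiplying a moment sequence by it destroys the Kummer-type congruences that characterize moment sequences. This is not cosmetic. With the correct normalization one gets
$$
(1-\chi(5)5^{k+2^nq})(\beta_{k+2^nq,\chi}-\beta_{k,\chi})+\chi(5)5^{k}(1-5^{2^nq})\beta_{k,\chi}=\int_{\Z_2^*}\chi(x)x^{k-1}(x^{2^nq}-1)\,d\mu_\chi,
$$
where the integral vanishes modulo $2^{n+2}$ outright because $x^{2^nq}\equiv1\pmod{2^{n+2}}$ on $\Z_2^*$; the entire main term $\chi(5)2^{n+2}\beta_{d,\chi}$ of the theorem comes from the left-hand correction $\chi(5)5^{k}(5^{2^nq}-1)\beta_{k,\chi}$ together with $5^{2^nq}-1\equiv2^{n+2}(q+2)\pmod{2^{n+4}}$ --- exactly the term your normalization erases. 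You are then forced to manufacture the main term out of the logarithmic-derivative integral $\int\chi(x)x^{k-1}t\,d\mu_\chi$, a computation you explicitly defer (``the main obstacle I anticipate'') and which cannot produce the right answer from a measure that does not exist. Even after fixing the normalization, two further steps still need proof: that $1-\chi(5)5^{k+2^nq}$ may be replaced by $1-\chi(5)$ (their ratio is a unit $\equiv1\pmod2$ because $\chi(5)5^{j}-1\equiv\chi(5)-1\pmod4$ and $(1-\chi(5))^2$ does not divide $\chi(5)5^{j}-1$), and the congruence $2\beta_{k,\chi}\equiv2\beta_{d,\chi}\pmod2$ needed to pass from $\beta_{k,\chi}$ to $\beta_{d,\chi}$ on the right-hand side.
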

Similarly, for odd prime $p$, we have
\begin{Thm}\label{scp}
Let $p$ be an odd prime and $\chi$ be a primitive character $\chi$ modulo $p^m$, where $m\geq 1$. Let $n$ and $q$ be positive integers with $p\nmid q$. Suppose that $k\geq 0$ and $k$ has the opposite parity as $\chi$.

\medskip\noindent {\rm (i)} Suppose that $1-\chi(a)a^{k+1}$ is prime to $p$ for an integer $a$ with $p\nmid a$. Then
\begin{equation}
L(-k-\phi(p^n)q,\chi)\equiv L(-k,\chi)\pmod{p^{n}}.
\end{equation}

\medskip\noindent {\rm (ii)} Suppose that $m\geq 2$ and $1-\chi(a)a^{k+1}$ is not prime to $p$ for every $a$ with $(a,p)=1$. Then
\begin{equation}
\L_{k+\phi(p^n)q,\chi}-\L_{k,\chi}
\equiv\frac{p^{n}q}{1-\bar{\chi}(p+1)}\cdot\L_{d,\chi}\pmod{p^{n}},
\end{equation}
where $d\in\{0,1,2,\ldots,p-2\}$ with $k\equiv d\pmod{p-1}$. In particular, we have
\begin{equation}
\L_{k+(p-1)h,\chi}\equiv\L_{k,\chi}\pmod{p^{n}}
\end{equation}
if and only if $h\equiv 0\pmod{p^{n-1}}$.
\end{Thm}

\section{power sums}
\setcounter{Lem}{0}\setcounter{Thm}{0}\setcounter{Cor}{0}
\setcounter{equation}{0}
Define
$$
S_k(n,\chi)=\sum_{j=1}^n\chi(j)j^k.
$$
In this section, we shall mainly study $S_{k}(p^n,\chi)$ modulo $p^n$.
\begin{Lem}\label{skn}
Suppose that $p$ is a prime and $\chi$ is a character modulo $p^m$, where $m\geq 1$.
Then for $n\geq m$ and $k\geq 0$,
\begin{equation}\label{skne}
S_k(p^n,\chi)\equiv p^{n-m}S_k(p^m,\chi)\pmod{p^n},
\end{equation}
unless $p=2$, $n=1$ and $k$ is odd.
\end{Lem}
\begin{proof}
We use induction on $n$. There is nothing to do when $n=m$. Assume that $n>m$ and the assertion holds for smaller values of $n$.
By the induction hypothesis,
\begin{align*}
&S_k(p^n,\chi)=\sum_{i=0}^{p-1}\sum_{j=1}^{p^{n-1}}\chi(p^{n-1}i+j)(p^{n-1}i+j)^k
\equiv\sum_{i=0}^{p-1}\sum_{j=1}^{p^{n-1}}\chi(j)(kp^{n-1}ij^{k-1}+j^k)\\
=&pS_k(p^{n-1},\chi)+kp^{n-1}S_{k-1}(p^{n-1},\chi)\sum_{i=0}^{p-1}i\equiv p^{n-m}S_k(p^m,\chi)+\frac{kp^n(p-1)}{2}S_{k-1}(p^{n-1},\chi)\pmod{p^n}.
\end{align*}
Hence our assertion clearly holds for odd prime $p$. When $p=2$, we also have
$$
S_{k-1}(2^{n-1},\chi)=\sum_{j=1}^{2^{n-1}}\chi(j)j^{k-1}\equiv \sum_{j=1}^{2^{n-1}}\chi(j)=0\pmod{2}.
$$
\end{proof}
Thus we only need to consider $S_k(p^m,\chi)\mod p^m$.
\begin{Lem}\label{skn0}
Suppose that $p$ is a prime and $\chi$ is a primitive character modulo $p^m$. Suppose that $a$ is prime to $p$.
Then for $k\geq 0$,
\begin{equation}\label{skn0e}
(1-\chi(a)a^k)S_k(m,\chi)\equiv 0\pmod{p^m}.
\end{equation}
\end{Lem}
\begin{proof} Clearly our assertion immediately follows from the fact
$$
\sum_{j=1}^m\chi(j)j^k\equiv\sum_{j=1}^m\chi(ja)(ja)^k\equiv \chi(a)a^{k}\sum_{j=1}^m\chi(j)j^k\pmod{p^m}.
$$
\end{proof}
Consequently, for $n\geq m$, we always have
\begin{equation}S_k(p^n,\chi)\equiv 0\pmod{p^n},\end{equation}
provided that $1-\chi(a)a^k$ is prime to $p$ for some $a$.
\begin{Lem}\label{pru}
Suppose that $p$ is a prime and $\chi$ is a Dirichlet character with the conductor $p^m$.

(i) If $p$ is odd and $m>k\geq 1$, then $\chi(p^{m-k}+1)$ is a $p^k$-th primitive root of unity.

(ii) If $p=2$ and $m\geq 3$, then $\chi(5)\not=1$ is a $2^{m-2}$-th root of unity.
\end{Lem}
\begin{proof} (i) First, we show that $\chi(p^{m-1}+1)$ is a $p$-th primitive root of unity. Clearly,
$$
\chi(p^{m-1}+1)^p=\chi((p^{m-1}+1)^p)=\chi\bigg(1+\sum_{j=1}^p\binom{p}{j}p^{j(m-1)}\bigg)=\chi(1)=1.
$$
So we only need to show that $\chi(p^{m-1}+1)\not=1$. Assume on the contrary that $\chi(p^{m-1}+1)=1$.
Then $\chi(kp^{m-1}+1)=\chi((p^{m-1}+1)^k)=1$ for each $0\leq k\leq p-1$. Thus for any $0\leq k\leq p-1$ and $1\leq d\leq p^{m-1}-1$ with $p\nmid d$, letting $0\leq k'\leq p-1$ be the integer such that $dk'\equiv k\pmod{p}$, we have
$$
\chi(kp^{m-1}+d)=\chi(d(k'p^{m-1}+1))=\chi(d)\chi(k'p^{m-1}+1))=\chi(d).
$$
Hence $\chi$ can be reduced to a character modulo $p^{m-1}$. This leads an contradiction since the conductor of $\chi$ is $p^m$.

Suppose that $k\geq 2$. Similarly, we have $\chi(p^{m-k}+1)$ is a $p^k$-th primitive root of unity. Note that
$$
(p^{m-k}+1)^{p^{k-1}}=1+p^{m-1}+\sum_{r=2}^{p^{k-1}}\binom{p^{k-1}-1}{r-1}\frac{p^{m-1+(m-k)(r-1)}}{r}\equiv p^{m-1}+1\pmod{p^m},
$$
since $p^{r-1}$ can't divides $r$ for $r\geq 2$.
Then we have $\chi(p^{m-k}+1)^{p^{k-1}}\not=1$, i.e., $\chi(p^{m-k}+1)$ is a $p^k$-th root of unity.

(ii) Since $a^{2^{m-2}}\equiv 1\pmod{2^m}$ for any odd $a$, we have $\chi(5)^{2^{m-2}}=\chi(5^{2^{m-2}})=1$. Assume that $\chi(5)=1$. Note that
for every $a\equiv 1\pmod{4}$, there exist $0\leq j\leq 2^{m-2}-1$ such that $a\equiv 5^j\pmod{2^m}$. Hence $\chi(a)=1$ for every $a\equiv 1\pmod{4}$. Thus $\chi$ can be reduced to a character modulo $4$. So we must have $\chi(5)\not=1$.
\end{proof}
Suppose that the conductor of $\chi$ is $p^m$. If $m=1$, by (\ref{skne}), clearly we have $S_k(p^n,\chi)$ is divisible by $p^{n-1}$.
If $p$ is odd and $m\geq 2$, then by Lemma \ref{pru}, $1-\chi(p+1)$ is a proper divisor of $p$.
So substituting $a=p+1$ in (\ref{skn0}), we also have $p^{n-1}$ divides $S_k(p^n,\chi)$. Thus we can get
\begin{Lem}\label{skn01} Suppose that $p$ is a prime and $\chi$ is a primitive character modulo $p^m$.
Then for $n\geq m$ and $k\geq 0$,
\begin{equation}\label{skpm1}S_k(p^n,\chi)\equiv0\pmod{p^{n-1}}.
\end{equation}
Furthermore,
if $m\geq 3$, or $m=2$ and $k$ is even, or $m=1$ and $k$ is odd, then for $n\geq\max\{m,2\}$,
\begin{equation}\label{skn01b}
S_k(2^n,\chi)\equiv0\pmod{2^n}.
\end{equation}
\end{Lem}
\begin{proof} By the above discussions, we only need to consider the second assertion of this lemma, i.e., the case  $p=2$.
If $m\geq 3$, then
\begin{align*}
&S_k(2^m,\chi)=\sum_{j=1}^{2^{m}}\chi(j)j^k
\equiv\sum_{i=0}^1\sum_{j=0}^{2^{m-2}-1}\chi((-1)^i5^j)(-1)^{ik}5^{jk}\\
=&\bigg(\sum_{i=0}^1\chi(-1)^i(-1)^{ik}\bigg)\bigg(\sum_{j=0}^{2^{m-2}-1}\chi(5)^j5^{jk}\bigg)\\
=&(1+\chi(-1)(-1)^k)\cdot\frac{1-5^{2^{m-2}k}}{1-\chi(5)5^k}\pmod{2^m}.
\end{align*}
If $\chi(-1)=(-1)^{k-1}$, there is nothing to do. Suppose that $\chi(-1)=(-1)^{k}$.
Clearly $$5^{2^{m-2}k}=(4+1)^{2^{m-2}k}\equiv 1\pmod{2^m}.$$
By Lemma \ref{pru},
$\chi(5)\not=1$ is a $2^{m-2}$-th root of unity, i.e., $1-\chi(5)$ divides $2$. So
$2(\chi(5)5^k-1)^{-1}$
is $2$-integral, by noting that
$$
1-\chi(5)5^k\equiv1-\chi(5)\pmod{4}.
$$
Thus we get that $2^m$ divides $S_k(2^m,\chi)$.

If $m=2$, then clearly $S_k(2^2,\chi)\equiv 0\pmod{2}$. And when $k$ is even, we have
$$
S_k(2^2,\chi)=1+\chi(3)3^k\equiv1+(-1)\cdot(-1)^k=0\pmod{4}.
$$
Finally, when $m=1$ and $k$ is odd, by Lemma \ref{skn}, we have
$$
S_k(2^n,\chi)\equiv 2^{n-2}S_k(2^2,\chi)=2^{n-2}(1+3^k)\equiv 0\pmod{2^n}.
$$
\end{proof}

\section{Voronoi's type congruence for $L(-n,\chi)$}
\setcounter{Lem}{0}\setcounter{Thm}{0}\setcounter{Cor}{0}
\setcounter{equation}{0}

In 1889, Voronoi proved the following congruence for the Bernoulli numbers:
$$
(a^k-1)B_k\equiv ka^{k-1}\sum_{j=1}^{p-1}j^{k-1}\bigg\lfloor\frac{ja}{p}\bigg\rfloor\pmod{p},
$$
provided $k$ is even and $p\nmid a$, where $\lfloor x\rfloor=\max\{z\in\Z:\,z\leq x\}$. In \cite{Sun05}, Z.-W. Sun proved a similar congruence for the Euler numbers:
\begin{equation}\label{sv}
\frac{3^{k+1}+1}{4}E_k\equiv\frac{3^{k}}{2}\sum_{j=0}^{2^{n}-1}(-1)^{j-1}(2j+1)^k\bigg\lfloor\frac{3j+1}{2^n}\bigg\rfloor\pmod{2^n}
\end{equation}
for even $k\geq 0$. He also showed that the Stern congruence (\ref{stern}) is an easy consequence of (\ref{sv}). Now we are ready to extend (\ref{sv}) for $L(-k,\chi)$.
\begin{Thm} \label{ab} Let $p$ be a prime and $a$ be an integer with $p\nmid a$. Suppose that $\chi$ is a character modulo $p^{m}$. Suppose that $n\geq m$, $k\geq 0$ and $k$ has the opposite parity as $\chi$. If $p\geq 5$, or $p=2,3$ and $n\geq 2$, Then
\begin{equation}\label{t1e2}
(1-\chi(a)a^{k+1})L(-k,\chi)\equiv
\chi(a)a^{k}\sum_{j=1}^{p^{n}-1}\chi(j)
j^{k}\bigg\lfloor\frac{ja}{p^{n}}\bigg\rfloor
\pmod{p^{n}}.
\end{equation}
\end{Thm}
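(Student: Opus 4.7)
The plan is to adapt the classical proof of Voronoi's congruence to the character-twisted setting, using the distribution formula for $B_{k,\chi}$ together with a careful second-order expansion of the substitution $j \mapsto ja \bmod p^n$ on $(\Z/p^n\Z)^{\times}$.

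Write $S_l := \sum_{j=1}^{p^n-1}\chi(j) j^l$, and for each $l\geq 1$,
$$
T_l := \sum_{j=1}^{p^n-1}\chi(j)\, j^{l-1}\bigg\lfloor\frac{ja}{p^n}\bigg\rfloor,
$$
so the sum appearing in (\ref{t1e2}) is exactly $T_k$. The key intermediate step is the master identity
$$
(\chi(a) a^l - 1)\, S_l \equiv l\, p^n\,\chi(a)\, a^{l-1}\, T_l \pmod{p^{2n}},
$$
which I obtain by substituting $j = j'a \bmod p^n$ (a bijection on $(\Z/p^n\Z)^{\times}$) in the definition of $S_l$ and expanding $(j'a - p^n\lfloor j'a/p^n\rfloor)^l$ through second order in $p^n$ by the binomial theorem. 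The identity $\chi(j'a \bmod p^n) = \chi(a)\chi(j')$ is exact since $n \geq m$.

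Second, the distribution formula (also valid because $n \geq m$) combined with $B_k(x) = \sum_{i=0}^k \binom{k}{i} B_i\, x^{k-i}$ gives
$$
B_{k,\chi} = p^{n(k-1)}\sum_{j=1}^{p^n-1}\chi(j)\, B_k(j/p^n) = \sum_{i=0}^k \binom{k}{i} B_i\, p^{n(i-1)} S_{k-i}.
$$
Multiplying by $\chi(a) a^k - 1$ and applying the master identity with $l=k$ to the $i=0$ term produces the desired contribution $k\,\chi(a)\, a^{k-1} T$ modulo $p^n$. Each $i \geq 1$ term is handled via the sharpened consequence $(\chi(a) a^k - 1) S_{k-i} \equiv (a^i - 1) S_{k-i} \pmod{p^n}$ (obtained from the master identity applied to $l = k-i$), combined with the parity identity
$$
(1 - \chi(-1)(-1)^l)\, S_l \equiv l\,\chi(-1)(-1)^{l-1}\, p^n\, S_{l-1} \pmod{p^{2n}},
$$
derived from the involution $j \mapsto p^n - j$. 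Since $\chi$ and $k$ share parity, $\chi(-1) = (-1)^k$, and this forces $S_{k-i}$ to be divisible by $p^n$ (respectively by $p^{n-1}$ when $p=2$) whenever $i$ is odd. Dividing by $k$ in $\Z_p$ then converts $B_{k,\chi}$ to $\beta_{k,\chi}$ and gives the stated congruence.

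The main obstacle is the precise $p$-adic bookkeeping in the final step. For $p \geq 5$, the Bernoulli numbers $B_1, B_2, B_4, \ldots$ appearing below index $p-1$ have denominators coprime to $p$, so the estimates are clean. For $p \in \{2, 3\}$, the denominator $\tfrac12$ of $B_1$ (and $\tfrac16$ of $B_2$ when $p=3$), combined with possible factors of $p$ in $k$, can cost up to two powers of $p$ in the $i=1$ and $i=2$ contributions; the hypothesis $n \geq 2$ is precisely what is needed to dominate these losses by $p^n$ and secure the final congruence modulo $p^n$.
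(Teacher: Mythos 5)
Your outline follows essentially the same route as the paper: the ``master identity'' is the paper's expansion of $\chi(a)a^kS_k(p^n,\chi)$ via $ja=p^n\lfloor ja/p^n\rfloor+r_j$, the distribution formula $p^nB_{k,\chi}=\sum_t\binom{k}{t}p^{tn}B_tS_{k-t}(p^n,\chi)$ is the paper's second ingredient, and your involution $j\mapsto p^n-j$ is a clean substitute for the paper's Lemma on $S_{k-1}$ (which takes $a=-1$ in a geometric-series argument). For $p$ odd the plan goes through, provided you strengthen the master identity to hold modulo $p^{2n+\nu_p(k)}$ rather than $p^{2n}$: the final step divides by $k$, and a congruence for $B_{k,\chi}$ modulo $p^n$ only yields one for $\beta_{k,\chi}$ modulo $p^{n-\nu_p(k)}$. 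This strengthening is free for odd $p$, since the $t$-th binomial term $\frac{k}{t}\binom{k-1}{t-1}p^{tn}\lfloor\cdot\rfloor^t r_j^{k-t}$ has valuation at least $2n+\nu_p(k)$ for $t\geq 2$, but you nowhere acknowledge that the modulus must track $\nu_p(k)$, and your closing remark that factors of $p$ in $k$ ``cost up to two powers of $p$'' is not correct as stated ($\nu_p(k)$ is unbounded).

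The genuine gap is at $p=2$ with $k$ (hence $\chi$) even. There the quadratic term $\binom{k}{2}2^{2n}\sum_j\chi(ja)\lfloor ja/2^n\rfloor^2(ja)^{k-2}=k(k-1)2^{2n-1}\sum_j(\cdots)$ has valuation only $2n-1+\nu_2(k)$, so it survives modulo the required $2^{2n+\nu_2(k)}$; your master identity modulo $2^{2n}$ hides it, but it resurfaces the moment you divide by $k$. No hypothesis on $n$ removes it --- the deficit is one power of $2$ independently of $n$ --- so it cannot be ``dominated,'' it must be cancelled. The paper's part (ii) does exactly this: it reduces $\lfloor x\rfloor^2\equiv\lfloor x\rfloor\pmod 2$, recognizes $(k-1)\sum_j\chi(ja)\lfloor ja/2^n\rfloor(ja)^{k-2}$ as $2^{-n}(\chi(a)a^{k-1}-1)S_{k-1}(2^n,\chi)$ modulo $2$ via the master identity at level $k-1$, and cancels the result against the $B_1$ term $-\tfrac{k2^n}{2}(\chi(a)a^k-1)S_{k-1}(2^n,\chi)$ of the distribution formula, which is short by exactly the same single power of $2$. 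This pairing of the two deficient terms is the crux of the $p=2$ case and is absent from your proposal; what $n\geq2$ actually buys (together with $S_{k-2}(2^n,\chi)\equiv0\pmod{2^{n}}$) is only the disposal of the $B_2/12$ term.
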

\begin{proof} For $1\leq j<p^{n}$, we write
$$ja=p^{n}\bigg\lfloor\frac{ja}{p^{n}}\bigg\rfloor+r_j,$$ where $0\leq r_j
<p^{n}$. For a positive integer $d$, define $\nu_p(d)=\max\{\alpha\in\N:\,p^\alpha\mid d\}$.
Then
\begin{align*}
&\chi(a)a^kS_k(p^n,\chi)
=\sum_{j=1}^{p^{n}-1}\chi(ja)(ja)^{k}=
\sum_{j=1}^{p^{n}-1}\chi(ja)\bigg(p^{n}\bigg\lfloor\frac{ja}{p^{n}}\bigg\rfloor+r_{j}\bigg)^{k}\\
\equiv&\sum_{j=1}^{p^{n}-1}\chi({r_{j}})\sum_{t=0}^{k}\binom{k}{t}p^{tn}\bigg\lfloor\frac{ja}{p^{n}}\bigg\rfloor^t r_{j}^{k-t}\equiv\sum_{j=1}^{p^{n}-1}\chi({r_{j}})\bigg(r_{j}^{k}+\sum_{t=1}^{k}\frac{k}{t}\binom{k-1}{t-1}p^{tn}\bigg\lfloor\frac{ja}{p^{n}}\bigg\rfloor^t r_{j}^{k-t}\bigg)\\
\equiv&\sum_{j=1}^{p^{n}-1}\chi(j)j^k+kp^n\sum_{j=1}^{p^{n}-1}\chi(ja)
\bigg\lfloor\frac{ja}{p^{n}}\bigg\rfloor (ja)^{k-1}+\frac{k(k-1)}{2}p^{2n}\sum_{j=1}^{p^{n}-1}\chi(ja)
\bigg\lfloor\frac{ja}{p^{n}}\bigg\rfloor^2 (ja)^{k-2}\pmod{p^{2n+\nu_p(k)}},
\end{align*}
by noting that $\nu_p(t)\leq t-2$ for $t\geq 3$.
Thus substituting $k$ by $k+1$, we get
\begin{align}\label{p1}
\frac{\chi(a)a^{k+1}-1}{p^n}\cdot\frac{S_{k+1}(p^n,\chi)}{k+1}
\equiv\chi(a)a^{k}\sum_{j=1}^{p^{n}-1}\chi(j)
j^{k}\bigg\lfloor\frac{ja}{p^{n}}\bigg\rfloor+\frac{k}{2}\cdot p^{n}\sum_{j=1}^{p^{n}-1}\chi(ja)
\bigg\lfloor\frac{ja}{p^{n}}\bigg\rfloor^2 (ja)^{k-1} \pmod{p^{n}}.
\end{align}
In particular, if $p$ is odd, then
\begin{align}\label{po1}
\frac{\chi(a)a^{k+1}-1}{p^n}\cdot\frac{S_{k+1}(p^n,\chi)}{k+1}
\equiv\chi(a)a^{k}\sum_{j=1}^{p^{n}-1}\chi(j)
j^{k}\bigg\lfloor\frac{ja}{p^{n}}\bigg\rfloor\pmod{p^{n}}.
\end{align}

On the other hand, we know (cf. \cite[Eq. 3.2(5)]{UrbanowiczWilliams00})
\begin{align}&p^{n}B_{k+1,\chi}=S_{k}(p^{n},\chi)+\sum_{t=1}^{k+1}\binom{k+1}{t}p^{tn}B_tS_{k+1-t}(p^{n},\chi)\notag\\
=&S_{k+1}(p^{n},\chi)-\frac{(k+1)p^{n}S_{k}(p^{n},\chi)}{2}+\frac{k(k+1)p^{2n}S_{k-1}(p^{n},\chi)}{12}+\sum_{t=4}^{k+1}\frac{k+1}{t}
\binom{k}{t-1}p^{tn}B_tS_{k+1-t}(p^{n},\chi).\end{align}
In view of Lemma \ref{skn01}, when $p\geq 5$, or $p=2,3$ and $n\geq 2$, we have
$$
p^{n}B_{k+1,\chi}\equiv S_{k+1}(p^{n},\chi)-\frac{(k+1)p^{n}}{2}\cdot S_{k}(p^{n},\chi)+\frac{k(k+1)}{12}\cdot p^{2n}S_{k-1}(p^{n},\chi)\pmod{p^{2n+\nu_p(k+1)}},
$$
since $pB_{t}$ is $p$-integral by the von Staudt-Clausen theorem. That is,
\begin{equation}\label{p2}
L(-k,\chi)\equiv -\frac1{p^n}\cdot\frac{S_{k+1}(p^{n},\chi)}{k+1}+\frac{S_{k}(p^{n},\chi)}{2}-\frac{k}{12}\cdot p^{n}S_{k-1}(p^{n},\chi)\pmod{p^{n}}.
\end{equation}
\noindent{\rm (i)} Suppose that $p\geq 5$, or $p=3$ and $n\geq 2$. Since $\chi(-1)\cdot(-1)^{k}-1=-2$ is not divisible by $p$, by Lemma \ref{skn0},
$$
S_{k}(p^{n},\chi)\equiv 0\pmod{p^{n}}.
$$
And by Lemma \ref{skn01}, $3\mid S_{k-1}(3^{n},\chi)$ if $n\geq 2$.
Thus from (\ref{p2}), it follows that
\begin{align*}
(1-\chi(a)a^k)L(-k,\chi)\equiv\frac{\chi(a)a^{k+1}-1}{p^{n}}\cdot\frac{S_{k+1}(p^{n},\chi)}{k+1}\equiv\chi(a)a^{k}\sum_{j=1}^{p^{n}-1}\chi(j)
j^{k}\bigg\lfloor\frac{ja}{p^{n}}\bigg\rfloor\pmod{p^{n}}.
\end{align*}

\medskip
\noindent{\rm (ii)} Now let $p=2$.
It follows from (\ref{p1}) that
$$
\frac{\chi(a)a^{k}-1}{2^n}\cdot S_{k}(2^n,\chi)\equiv k\sum_{j=1}^{2^{n}-1}\chi(ja)
\bigg\lfloor\frac{ja}{2^{n}}\bigg\rfloor (ja)^{k-1}
\pmod{2}.
$$
Applying (\ref{p1}) again, we deduce that
\begin{align*}
&\frac{\chi(a)a^{k+1}-1}{2^{2n-1}}\cdot\frac{S_{k+1}(2^n,\chi)}{k+1}-\frac{1}{2^{n-1}}\sum_{j=1}^{2^{n}-1}\chi(ja)
\bigg\lfloor\frac{ja}{2^{n}}\bigg\rfloor (ja)^{k}\\
\equiv&\sum_{j=1}^{2^{n}-1}\chi(ja)
\bigg\lfloor\frac{ja}{2^{n}}\bigg\rfloor^2 (ja)^{k-1}
\equiv k\sum_{j=1}^{2^{n}-1}\chi(ja)
\bigg\lfloor\frac{ja}{2^{n}}\bigg\rfloor (ja)^{k-1}\equiv\frac{\chi(a)a^{k}-1}{2^n}\cdot S_{k}(2^n,\chi)\pmod{2}.
\end{align*}
Since $k$ and $\chi$ have the opposite parity and $\chi$ is primitive,
we must have that $k$ and $m$ have the same parity if $1\leq m\leq 2$.
So in view of (\ref{skn01b}), we get that
$S_{k}(2^n,\chi)$ is divisible by $2^n$. Thus
$$
\frac{\chi(a)a^{k}-1}{2^n}\cdot S_{k}(2^n,\chi)\equiv\frac{\chi(a)a^{k+1}-1}{2^n}\cdot S_{k}(2^n,\chi)\pmod{2}.
$$
Consequently,
$$
\frac{\chi(a)a^{k+1}-1}{2^{n}}\cdot\frac{S_{k+1}(2^n,\chi)}{k+1}-\frac{\chi(a)a^{k+1}-1}{2}\cdot S_{k}(2^n,\chi)\equiv\sum_{j=1}^{2^{n}-1}\chi(ja)
\bigg\lfloor\frac{ja}{2^{n}}\bigg\rfloor (ja)^{k}\pmod{2^n}.
$$
Furthermore, we have
$$
kS_{k-1}(2^n,\chi)\equiv 0\pmod{2^n}\equiv0\pmod{4}.
$$
Therefore in view of (\ref{p2}),
\begin{align*}
(1-\chi(a)a^{k+1})L(-k,\chi)\equiv&\frac{\chi(a)a^{k+1}-1}{2^n}\cdot\bigg(\frac{S_{k+1}(p^{n},\chi)}{k+1}-S_{k}(p^{n},\chi)
+\frac{k}{12}\cdot2^{n}S_{k-1}(p^{n},\chi)\bigg)\\
\equiv&\chi(a)a^{k-1}\sum_{j=1}^{2^{n}-1}\chi(j)
j^{k-1}\bigg\lfloor\frac{ja}{2^{n}}\bigg\rfloor\pmod{2^{n}}.
\end{align*}
\end{proof}

\section{Proof of Theorems \ref{sc2} and \ref{scp}}
\setcounter{Lem}{0}\setcounter{Thm}{0}\setcounter{Cor}{0}
\setcounter{equation}{0}

\begin{proof}[Proof of Theorem \ref{scp}]
Let $n_*=\max\{n+1,m\}$. Applying Theorem \ref{ab}, we have
\begin{align}\label{scpc1}
&\frac{1-\chi(a)a^{k+\phi(p^n)q+1}}{\chi(a)a^{k+\phi(p^n)q}}\cdot L(-k-\phi(p^n)q,\chi)-\frac{1-\chi(a)a^{k+1}}{\chi(a)a^{k}}\cdot L(-k,\chi)\notag\\
\equiv&\sum_{\substack{1\leq j\leq p^{n_*}\\ (j,p)=1}}\chi(j)j^{k}\cdot(j^{\phi(p^n)q}-1)\cdot\bigg\lfloor\frac{ja}{p^{n_*}}\bigg\rfloor\equiv0\pmod{p^{n}}.
\end{align}
Thus (i) of Theorem \ref{scp} is immediately derived provided that $1-\chi(a)a^{k+1}$ is prime to $p$.

Now assume that $m\geq 2$ and $1-\chi(a)a^{k+1}$ is not prime to $p$ for every $1\leq a\leq p-1$.
By Lemma \ref{pru}, $\chi(p+1)$ is a $p^{m-1}$-th primitive root of unity. So we know that
$$
\frac{(1-\chi(p+1))^{p^{m-2}(p-1)}}{p}
$$
is a unit of the $p^{m-1}$-th cyclotomic field. For $k\geq 1$,
$$
1-\chi(p+1)(p+1)^{-(k+1)}\equiv 1-\chi(p+1)\pmod{p}.
$$
Hence $1-\chi(p+1)$ divides $p$, but $(1-\chi(p+1))^2$ doesn't divide $p$.
It follows that $(1-\chi(p+1))L(-k,\chi)$ is always $p$-integral.

Let $a$ be an integer such that
$$
(p+1)a\equiv1\pmod{p^{n_*+2}}.
$$
Obviously $a-1$ is divisible by $p$, but not divisible by $p^2$.
Noting that
$$
(p+1)^{\phi(p^n)q}=\sum_{j=0}\binom{p^{n-1}(p-1)q}{j}p^j\equiv1+\cdot p^{n}(p-1)q\equiv1-p^{n}q\pmod{p^{n+1}},
$$
we get that
\begin{align}\label{scpc2}
&\frac{1-\chi(a)a^{k+\phi(p^n)q+1}}{\chi(a)a^{k+\phi(p^n)q}}\cdot L(-k-\phi(p^n)q,\chi)-\frac{1-\chi(a)a^{k+1}}{\chi(a)a^{k}}\cdot L(-k,\chi)\notag\\
\equiv&\frac{1-\chi(p+1)(p+1)^{k+1}}{p+1}\cdot L(-k,\chi)-\frac{1-\chi(p+1)(p+1)^{k+\phi(p^n)q+1}}{p+1}\cdot L(-k-\phi(p^n)q,\chi)\notag\\
\equiv&\frac{1-\chi(p+1)(p+1)^{k+\phi(p^n)q+1}}{p+1}\cdot (L(-k,\chi)-L(-k-\phi(p^n)q,\chi))-\frac{\chi(p+1)(p+1)^{k+1}p^{n}q}{p+1}\cdot L(-k,\chi)\notag\\
&\pmod{p^{n}}.
\end{align}
Furthermore, we have
\begin{align}\label{scpc3}
(1-\chi(a))(L(-k,\chi)-L(-d,\chi))\equiv\frac{(1-\chi(a))\chi(a)a^{k}}{1-\chi(a)a^{k+1}}\sum_{j=1}^{p^{m}-1}\chi(j)
(j^{k}-j^d)\bigg\lfloor\frac{ja}{p^{m}}\bigg\rfloor
\equiv0\pmod{\frac{p}{1-\chi(a)}}.
\end{align}
Hence, combining (\ref{scpc1}), (\ref{scpc2}) and (\ref{scpc3}), we obtain that
\begin{align*}
&(1-\chi(p+1))(L(-k,\chi)-L(-k-\phi(p^{n})q,\chi))\\
\equiv&\frac{(p+1)(1-\chi(p+1))}{1-\chi(p+1)(p+1)^{k+\phi(p^{n})q+1}}\cdot\frac{\chi(p+1)(p+1)^{k+1}p^{n}q}{p+1}\cdot L(-k,\chi)\\
\equiv&\chi(p+1)p^{n+1}q\cdot L(-k,\chi)\equiv\chi(p+1)p^{n}q\cdot L(-d,\chi)\pmod{p^{n}}.
\end{align*}

Finally, we need to show that $(1-\chi(p+1))L(-d,\chi)$ is not divisible by $p$.
Let $g$ be a primitive root of $p^{m}$. Since $1-\chi(g)g^{d+1}$ is not prime to $p$, we may assume that $\p$ is a prime ideal in the algebraic integers ring of the $\phi(p^m)$-th cyclotomic field, dividing both
$1-\chi(g)g^{d+1}$ and $p$. It suffices to show that
$$
\sum_{j=1}^{p^{m}-1}\chi(j)j^{d}\bigg\lfloor\frac{j(p+1)}{p^{m}}\bigg\rfloor\not\equiv 0\pmod{\p}.
$$
Note that $\p$ divide $1-\chi(g)g^{d+1}$ implies that $\p$ divides $1-\chi(g^s)(g^s)^{d+1}$ for every $s$. So we have $\p$ divides $1-\chi(j)j^{d+1}$ for any $j$ with $p\nmid j$.
It follows that
$$
\sum_{j=1}^{p^{m}-1}\chi(j)j^{d}\bigg\lfloor\frac{j(p+1)}{p^{m}}\bigg\rfloor\equiv
\sum_{\substack{1\leq j\leq p^{m}-1\\ (j,p)=1}}\frac1{j}\bigg\lfloor\frac{j(p+1)}{p^{m}}\bigg\rfloor\equiv\frac{(p+1)((p+1)^{p^{m-1}(p-1)}-1)}{p^m}
\equiv-1\pmod{\p},
$$
where we use the following known result (cf. \cite{Lerch05}):
$$
\frac{a^{\phi(n)}-1}{n}\equiv\frac1{a}\sum_{\substack{1\leq j\leq n\\ (j,n)=1}}\frac1{j}\bigg\lfloor\frac{ja}{n}\bigg\rfloor\pmod{n}
$$
where $(a,n)=1$
\end{proof}

\begin{proof}[Proof of Theorem \ref{sc2}] Since $m\geq 3$, $\chi(5)\not=1$ is a $2^{m-2}$-th primitive root of unity. So $1-\chi(5)$ divides $2$ and
$$
1-\chi(5)5^k\equiv1-\chi(5)\pmod{4}.
$$
Furthermore, applying Theorem \ref{ab} and recalling $\chi(-1)=(-1)^{k-1}$, we have
\begin{align*}
&\frac{1-\chi(5)5^{k+1}}{\chi(5)^{k}}\cdot L(-k,\chi)\\
\equiv&\sum_{j=1}^{2^{m}}\chi(j)j^{k}\bigg\lfloor\frac{5j}{2^{m}}\bigg\rfloor
=\sum_{\substack{1\leq j\leq 2^{m}\\ j\equiv1\pmod{4}}}\bigg(\chi(j)j^{k}\bigg\lfloor\frac{5j}{2^{m}}\bigg\rfloor+\chi(2^m-j)(2^m-j)^{k}\bigg\lfloor\frac{5(2^m-j)}{2^{m}}\bigg\rfloor\bigg)\\
\equiv&2\sum_{\substack{1\leq j\leq 2^{m}\\ j\equiv1\pmod{4}}}\chi(j)j^{k}\bigg(\bigg\lfloor\frac{5j}{2^{m}}\bigg\rfloor-\bigg(4-\bigg\lfloor\frac{5j}{2^{m}}\bigg\rfloor\bigg)\bigg)
\equiv2\sum_{\substack{1\leq j\leq 2^{m}\\ j\equiv1\pmod{4}}}\chi(j)j^{k}\bigg\lfloor\frac{5j}{2^{m}}\bigg\rfloor\pmod{4}.
\end{align*}
So actually $(1-\chi(5))L(-k,\chi)$ is a multiple of $2$.

According to
$$
(4j\pm1)^{2^nq}=1+\sum_{i=1}(\pm1)^i\cdot\frac{2^nq}{i}\binom{2^nq-1}{i-1}\cdot4^ij^i,
$$
we have
$$
(4j\pm1)^{2^nq}\equiv 1\pm2^{n+2}qj\pmod{2^{n+3}}.
$$

Let $n_*=\max\{n+3,m\}$ and let $a$ be an integer such that
$$
5a\equiv1\pmod{2^{n_*}}.
$$
With the help of Theorem \ref{ab},
\begin{align}\label{nk}
&\frac{1-\chi(a)a^{k+2^nq+1}}{\chi(a)a^{k+2^nq}}\cdot L(-k-2^nq,\chi)-\frac{1-\chi(a)a^{k+1}}{\chi(a)a^{k}}\cdot L(-k,\chi)\notag\\\equiv&
\sum_{\substack{1\leq j\leq 2^{n_*}\\ j\equiv1\pmod{4}}}\bigg(\chi(j)j^{k}(j^{2^nq}-1)\bigg\lfloor\frac{ja}{2^{n_*}}\bigg\rfloor+
\chi(2^{n_*}-j)(2^{n_*}-j)^{k}((2^{n_*}-j)^{2^nq}-1)\bigg\lfloor\frac{(2^{n_*}-j)a}{2^{n_*}}\bigg\rfloor\bigg)\notag\\
\equiv&\sum_{j=0}^{2^{n_*}-1}\chi(4j+1)(4j+1)^{k}\cdot
\bigg(2^{n+2}qj\bigg\lfloor\frac{(4j+1)a}{2^{n_*}}\bigg\rfloor+
2^{n+2}q(2^{n_*-2}-j)\bigg(a-1-\bigg\lfloor\frac{(4j+1)a}{2^{n_*}}\bigg\rfloor\bigg)\bigg)\notag\\
\equiv&0\pmod{2^{n+3}}.
\end{align}
On the other hand, note that
\begin{align}\label{nk1}
&\frac{1-\chi(a)a^{k+2^nq+1}}{\chi(a)a^{k+2^nq}}\cdot L(-k-2^nq,\chi)-\frac{1-\chi(a)a^{k+1}}{\chi(a)a^{k}}\cdot L(-k,\chi)\notag\\
\equiv&5^{-1}(1-\chi(5)5^{k+1})\cdot L(-k,\chi)-5^{-1}(1-\chi(5)5^{k+2^nq+1})\cdot L(-k-2^nq,\chi)\notag\\
\equiv&5^{-1}(1-\chi(5)5^{k+2^nq+1})(L(-k,\chi)-L(-k-2^nq,\chi))+\chi(5)5^{k}2^{n+2}qL(-k,\chi)\pmod{2^{n+3}}.
\end{align}
Thus by (\ref{nk}) and (\ref{nk1}), we get
\begin{align*}
(1-\chi(5))(L(-k-2^nq,\chi)-L(-k,\chi))\equiv&\frac{1-\chi(5)}{1-\chi(5)5^{k+2^nq+1}}\cdot\chi(5)5^{k+1}2^{n+2}qL(-k,\chi)\notag\\\equiv&
\chi(5)2^{n+2}L(-k,\chi)\equiv\chi(5)2^{n+2}L(-d,\chi)\pmod{2^{n+3}},
\end{align*}
where the last step follows from
$$
L(-k,\chi)\equiv\frac{\chi(a)a^{k}}{1-\chi(a)a^{k+1}}
\sum_{j=1}^{2^{m}-1}\chi(j)
j^{k}\bigg\lfloor\frac{ja}{2^{m}}\bigg\rfloor\equiv
\frac{\chi(a)a^{d}}{1-\chi(a)a^{d+1}}
\sum_{j=1}^{2^{m}-1}\chi(j)
j^{d}\bigg\lfloor\frac{ja}{2^{m}}\bigg\rfloor\equiv L(-d,\chi)
\pmod{2^{2}}.
$$

The remainder task to prove that
$$
\frac{1-\chi(5)5^{d+1}}{\chi(5)5^{d}}\cdot\frac{L(-d,\chi)}{2}\equiv
\sum_{\substack{0\leq j\leq 2^{m-2}-1}}\chi(4j+1)\bigg\lfloor\frac{5(4j+1)}{2^{m}}\bigg\rfloor\not\equiv0\pmod{1-\chi(5)}.
$$
It is easy to see that $1-\chi(5)$ divides $1-\chi(4j+1)$ for every $j$. So
\begin{align*}
&\sum_{\substack{0\leq j\leq 2^{m-2}-1}}\chi(4j+1)\bigg\lfloor\frac{5(4j+1)}{2^{m}}\bigg\rfloor\equiv
\sum_{\substack{0\leq j\leq 2^{m-2}-1}}\bigg\lfloor\frac{5(4j+1)}{2^{m}}\bigg\rfloor\\
\equiv&|\{j:\,2^m\leq 20j+5<2^{m+1}\}|+
|\{j:\,3\cdot2^m\leq 20j+5<2^{m+2}\}|\pmod{1-\chi(5)}.
\end{align*}
Note that
$$
\frac{s\cdot 2^{m+4}-5}{20}=3s\cdot 2^{m-2}+\frac{s\cdot 2^{m}-5}{20}.
$$
Thus it suffices to show that
$$
|\{j:\,2^m\leq 20j+5<2^{m+1}\}|+
|\{j:\,3\cdot2^m\leq 20j+5<2^{m+2}\}|
$$
is odd for $3\leq m\leq 6$. Of course, this can be verified directly.
\end{proof}

\begin{Ack}
We are grateful to Professors Zhi-Hong Sun and Zhi-Wei Sun for their helpful discussions on Stein's congruence.
\end{Ack}

\end{document}